\newtheorem{theorem}{Theorem}[section]
\newtheorem{claim}[theorem]{Claim}
\newtheorem*{claim*}{Claim}
\newtheorem{conjecture}[theorem]{Conjecture}
\newtheorem{remark}[theorem]{Remark}
\theoremstyle{definition}
\title{Triangle-free $d$-degenerate graphs have small fractional chromatic number}
\author{Anders Martinsson}
\thanks{Institute of Theoretical Computer Science, Department of Computer Science, ETH Z\"{u}rich, Switzerland.}
\thanks{\texttt{anders.martinsson@inf.ethz.ch}}
\date{\today}
\begin{document}
\begin{abstract} A well-known conjecture by Harris states that any triangle-free $d$-degenerate graph has fractional chromatic number at most $O\left(\frac{d}{\ln d}\right)$. This conjecture has gained much attention in recent years, and is known to have many interesting implications, including a conjecture by Esperet, Kang and Thomass\'e that any triangle-free graph with minimum degree $d$ contains a bipartite induced subgraph of minimum degree $\Omega(\log d)$. Despite this attention, Harris' conjecture has remained wide open with no known improvement on the trivial upper bound, until now.

In this article, we give an elegant proof of Harris' conjecture. In particular, we show that any triangle-free $d$-degenerate graph has fractional chromatic number at most $(4+o(1))\frac{d}{\ln d}.$ The conjecture of Esperet et al. follows as a direct consequence. We also prove a more general result, showing that for any triangle-free graph $G$, there exists a random independent set in which each vertex 
$v$ is included with probability $\Omega(p(v))$, where $p:V(G)\rightarrow [0,1]$ is any function that satisfies a natural condition.
\end{abstract}

\maketitle

\section{Introduction}

The \emph{fractional chromatic number} $\chi_f(G)$ of a graph $G$ is a linear relaxation of the chromatic number $\chi(G)$ defined as the smallest real number $k\geq 1$ such that there exists a probability distribution over the independent sets of $G$ such that each vertex is present with probability at least $1/k$. Let $\Delta(G)$ denote the maximum degree in $G$ and let $d(G)$ denote the degeneracy of $G$. It is well-known through greedy coloring that
$$ \chi_f(G) \leq \chi(G)\leq d(G)+1 \leq \Delta(G)+1,$$
where, by the classical Brooks' Theorem \cite{Brooks41}, equality holds throughout if and only if some connected component of $G$ is a complete graph on $\Delta(G)+1$ vertices.

It is natural to ask if these upper bounds can be improved under the assumption that $G$ is somehow ``far'' from being a complete graph. A prime example of this is the seminal result of Johansson \cite{Johansson96} that $\chi(G)\leq O\left(\frac{\Delta(G)}{\log \Delta(G)}\right)$ for any \emph{triangle-free} graph. A more recent breakthrough result by Molloy \cite{Molloy19} improves this to
$$\chi(G) \leq (1+o(1))\frac{\Delta(G)}{\ln(\Delta(G))}.$$
This is known to be best possible up to at most a factor $2$, see \cite{Bollobas81}. These results have paved the way for a long list of interesting strengthenings and generalizations in the literature, see for instance \cite{Alon99,Anderson23,Anderson24,Bonamy22,Bradshaw23,Davies20B,Hurley21A,Hurley21B,Hurley23,Martinsson21}.

Observe that Molloy's result implies $\chi_f(G)\leq (1+o(1))\frac{\Delta(G)}{\ln \Delta(G)}$ for all triangle-free graphs, which is also the best known upper bound for the fractional chromatic number in terms of the maximum degree. Nevertheless, the literature on the fractional chromatic number, or, more generally, random processes over the independent sets of triangle-free graphs is quite rich. A natural example of this is the uniform distribution over the independent sets of a graph, which in turn generalizes to the so-called \emph{hard-core model}, see for instance  \cite{Davies18,Davies25}. For some recent examples of other processes over the independent sets of triangle-free graphs, see for instance \cite{Davies20A,Kelly24,Martinsson24,Pirot21}.

It is tempting to think that the upper bound on $\chi(G)$ in terms of degeneracy can be improved under the assumption of triangle-freeness -- similarly to Johansson's and Molloy's results. However, this turns out to be false. A number of articles ranging back to at least the 1940s, see for instance \cite{Alon99,Descartes54,Kostochka99,Zykov49},
present constructions of $d$-degenerate triangle-free graphs with chromatic number $d+1$. Instead, a well-known conjecture by Harris \cite{Harris19} proposes that the fractional chromatic number of a triangle-free graph should have this relationship to degeneracy.
\begin{conjecture}[cf. Conjecture 6.2 in \cite{Harris19}]\label{conj:harris} Suppose that $G$ is $d$-degenerate and triangle-free. Then $\chi_f(G)= O(d/\log d)$.
\end{conjecture}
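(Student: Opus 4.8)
My plan is to establish the ``more general result'' advertised in the abstract and to deduce Harris' conjecture from it. Concretely, I would aim to prove that there is an absolute constant $c>0$ with the following property: if $G$ is triangle-free, $v_1,\dots,v_n$ is any ordering of $V(G)$, and $p\colon V(G)\to[0,1]$ satisfies
\[
\sum_{u\in N^-(v_i)}p(u)\ \le\ c\,\ln\frac{1}{p(v_i)}\qquad\text{for every }i,
\]
where $N^-(v_i):=N(v_i)\cap\{v_1,\dots,v_{i-1}\}$ denotes the set of \emph{earlier} neighbours of $v_i$, then there is a random independent set $I$ of $G$ with $\Pr[v\in I]\ge p(v)$ for all $v$. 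The analysis below will in fact give $c=\tfrac14-o(1)$. Granting this, Harris' conjecture is immediate: given a triangle-free $d$-degenerate graph $G$, order its vertices by a degeneracy ordering, so that $|N^-(v_i)|\le d$ for all $i$, and take the constant function $p\equiv(1-\delta)\tfrac{c\ln d}{d}$ for a small fixed $\delta>0$. For $d$ large the hypothesis holds, since $\sum_{u\in N^-(v_i)}p(u)\le d\cdot(1-\delta)\tfrac{c\ln d}{d}=(1-\delta)c\ln d$ while $\ln\frac1{p(v_i)}=(1-o(1))\ln d$; hence $\chi_f(G)\le 1/p=(1+o(1))\tfrac{d}{c\ln d}=(4+o(1))\tfrac{d}{\ln d}$, and the conjecture of Esperet, Kang and Thomass\'e follows as noted in the introduction.

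To construct $I$ I would run a greedy selection along the ordering. Let $X_1,\dots,X_n$ be independent and uniform on $[0,1]$, and put $v_i\in I$ precisely when $X_i<q_i$ and $I\cap N^-(v_i)=\emptyset$, where the ``activities'' $q_i\in[0,1]$ are chosen recursively so as to meet the target marginals exactly, namely $q_i:=p(v_i)\big/\Pr[\,I\cap N^-(v_i)=\emptyset\,]$. The event $\{I\cap N^-(v_i)=\emptyset\}$ is determined by $X_1,\dots,X_{i-1}$, and $X_i$ is independent of it, so \emph{provided} $q_i\le 1$ we obtain $\Pr[v_i\in I]=p(v_i)$ for every $i$; and $I$ is an independent set by construction. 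Thus everything reduces to verifying $q_i\le 1$, i.e.\ $\Pr[I\cap N^-(v_i)=\emptyset]\ge p(v_i)$. This is exactly where triangle-freeness enters: $N^-(v_i)$ is an \emph{independent} set of $G$, and I would invoke the correlation bound
\[
\Pr\big[\,I\cap S=\emptyset\,\big]\ \ge\ \prod_{u\in S}\big(1-\Pr[u\in I]\big)\qquad\text{for every independent }S\subseteq V(G).
\]
Combined with the inductively established identity $\Pr[u\in I]=p(u)$, this gives $\Pr[I\cap N^-(v_i)=\emptyset]\ge\prod_{u\in N^-(v_i)}(1-p(u))\ge\exp\!\big(-2\sum_{u\in N^-(v_i)}p(u)\big)\ge p(v_i)^{2c}\ge p(v_i)$ as soon as $c\le\tfrac12$, so indeed $q_i\le 1$ and the whole scheme is consistent. (Bounding $\sum_{u\in N^-(v_i)}\ln\tfrac1{1-p(u)}$ rather than $\sum_{u\in N^-(v_i)}p(u)$ in the hypothesis would make this last step exact and eliminate any fuss about values $p(u)$ close to $1$.)

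The crux, and the step I expect to be the genuine obstacle, is the displayed correlation bound. It truly needs $S$ to be independent: for an edge $uv$ the events $\{u\in I\}$ and $\{v\in I\}$ are negatively correlated, hence so are their complements, and the inequality \emph{reverses}; so triangle-freeness is not a cosmetic hypothesis here but precisely the structural feature that makes the greedy selection analyzable. I would prove the bound by induction on $|V(G)|$, conditioning on the coin $X_1$ of the first vertex: on $\{X_1<q_1\}$ we have $v_1\in I$ and the remaining choices reproduce the same greedy process on $G-N[v_1]$, while on $\{X_1\ge q_1\}$ we have $v_1\notin I$ and the process continues on $G-v_1$; in both branches the surviving part of $S$ is still independent, one applies the inductive hypothesis to each branch and then reassembles. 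The delicate point is to track how conditioning on $\{v_1\in I\}$ versus $\{v_1\notin I\}$ shifts the marginals $\Pr[u\in I]$; I expect to need, as an auxiliary ingredient of a suitably strengthened induction, the monotonicity fact that for $u$ non-adjacent to $v_1$ the conditioning $\{v_1\notin I\}$ can only \emph{decrease} $\Pr[u\in I]$ (and $\{v_1\in I\}$ can only increase it). One checks directly in small cases (paths of length two, the $5$-cycle, etc.) that the bound does hold, and on ``layered'' examples one sees that the positive correlation among $\{u\in I\}$ for $u$ ranging over a common neighbourhood is also exactly what keeps the activities $q_i$ bounded, so the mechanism is robust; it is only the clean inductive proof of the correlation inequality in full generality that requires work. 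Everything else — the reduction to the local statement, the choice of $p$ for the degeneracy application, and the bookkeeping along the degeneracy ordering — is routine.
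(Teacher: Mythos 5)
Your reduction of Harris' conjecture to a local statement, and the greedy construction with activities $q_i=p(v_i)/\Pr[I\cap N^-(v_i)=\emptyset]$ hitting the marginals exactly, are fine as bookkeeping; but the step you yourself flag as the crux --- the correlation bound $\Pr[I\cap S=\emptyset]\ge\prod_{u\in S}(1-\Pr[u\in I])$ for independent $S$ --- is not just unproven, it is false for your process, and so is the auxiliary monotonicity you plan to induct with. Take the path $u\!-\!a\!-\!b\!-\!w$ processed in this order (and, to put it in exactly the form you need, add a final vertex $z$ adjacent to $u$ and $w$, which keeps the graph triangle-free and makes $S=N^-(z)=\{u,w\}$). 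Conditioning on $\{u\notin I\}$ raises the activity-driven probability that $a\in I$ to $p_a/(1-p_u)$, hence lowers $\Pr[b\in I]$ below $p_b$, hence \emph{raises} $\Pr[w\in I]$ above $p_w$: explicitly $\Pr[w\in I\mid u\notin I]=\frac{p_w}{1-p_b}\bigl(1-\frac{p_b}{1-p_a}\bigl(1-\frac{p_a}{1-p_u}\bigr)\bigr)>p_w$. So $\Pr[u\notin I,\,w\notin I]<(1-p_u)(1-p_w)$, the product bound fails for the very set $N^-(z)$ you need it for, and the claimed monotonicity (that conditioning on $\{v_1\notin I\}$ can only decrease $\Pr[u\in I]$ for non-neighbors $u$) fails at distance $3$. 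These alternating-sign correlations along paths are exactly the known obstruction to this natural greedy approach, and whether the weaker fact you actually need, $\Pr[I\cap N^-(v_i)=\emptyset]\ge p(v_i)$, survives the compounding of such negative correlations over many earlier neighbours is precisely the open difficulty; your plan gives no handle on it.

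The paper circumvents correlation inequalities altogether by a differently designed random process: each vertex carries a positive weight, and at its step it either joins $I$ with probability $1-e^{-w(v_i)}$ (zeroing the weights of all later neighbours) or, with probability $e^{-w(v_i)}$, multiplies the weights of later neighbours by $e^{w(v_i)}$. The probability that a fixed $v_k$ joins $I$ is then computed exactly by a change of measure to a modified process in which the earlier neighbours of $v_k$ never join $I$ (Claim \ref{claim:procrel}), where triangle-freeness makes the total boosted weight $X$ of $N_L(v_k)$ a sum of martingales (Claim \ref{claim:martingale}); Markov's inequality applied to $X$ then yields $\Pr(v_k\in I)\ge\frac{1-e^{-\eps}}{2\eps}w_0(v_k)$ under the hypothesis $w_0(v_k)\exp\bigl(2\sum_{v_i\in N_L(v_k)}w_0(v_i)\bigr)\le\eps$, which is essentially the local statement you were aiming for. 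To rescue your write-up you would need either a proof of the $q_i\le 1$ bound by some other means or to switch to a mechanism of this kind; as it stands the argument has a genuine gap at its central lemma.
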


This conjecture has gained quite some attention in recent years. It is known to imply various other conjectures and strengthenings of known results in the literature \cite{Esperet19,Harris19,Janzer24,Kwan20,Martinsson24} including another well-known conjecture by Esperet, Kang and Thomass\'e \cite[Conjecture~1.5]{Esperet19} that any triangle-free graph with minimum degree $d$ contains an induced bipartite subgraph of minimum degree $\Omega(\log d).$ Currently the best known lower bound on the conjecture by Esperet et al. is $\Omega(\log d/\log\log d)$ due to Kwan, Letzter, Sudakov and Tran \cite{Kwan20}, though Gir\~{a}o and Hunter (personal communication) recently announced upcoming work improving this to average degree $(1-o(1))\ln d$. See also \cite{Davies20B,Kelly24}. Despite this attention, Harris' conjecture itself has remained wide open with no asymptotic improvement on the trivial upper bound of $O(d)$ having existed in the literature -- until now.

In this paper, we give an elegant proof of Conjecture \ref{conj:harris}. More precisely, our main result states as follows.
\begin{theorem}\label{thm:main} Suppose $G$ is a triangle-free and $d$-degenerate graph. Then $\chi_f(G)\leq (4+o(1))\frac{d}{\ln d},$ where the $o(1)$ term tends to $0$ as $d$ increases.
\end{theorem}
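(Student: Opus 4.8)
The plan is to prove a weighted generalisation and then specialise. Call $p\colon V(G)\to[0,1]$ \emph{admissible} for a vertex ordering $v_1\prec\cdots\prec v_n$ of a triangle-free graph $G$ if, writing $N^-(v)$ for the set of neighbours of $v$ preceding it, a local inequality such as $\sum_{u\in N^-(v)}p(u)\le\tfrac14\ln\!\big(1/p(v)\big)$ holds for every $v$. I will show that if $p$ is admissible then $G$ admits a probability distribution over its independent sets in which every vertex $v$ appears with probability \emph{exactly} $p(v)$ (which also yields the $\Omega(p(v))$ statement under laxer hypotheses). Granting this, Theorem~\ref{thm:main} follows: fix a degeneracy ordering, so $|N^-(v)|\le d$ for all $v$, and apply the result with the constant function $p\equiv\tfrac{\ln d-2\ln\ln d}{4d}$; admissibility holds for large $d$ since $\sum_{u\in N^-(v)}p(u)\le d\,p\le\tfrac14\ln(1/p)$, hence $\chi_f(G)\le 1/p=(4+o(1))\tfrac{d}{\ln d}$, and the Esperet--Kang--Thomass\'e consequence is immediate from the known reduction.

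To produce the distribution I would run a single-pass randomised greedy along the ordering. Starting from $I=\emptyset$, on reaching $v_i$: if $I\cap N^-(v_i)=\emptyset$, add $v_i$ to $I$ with some probability $\rho_i$, independently of all earlier randomness; otherwise leave $v_i$ out. Since every edge of $G$ is directed backwards from its later endpoint, $I$ is an independent set throughout. Writing $q_i:=\Pr[I\cap N^-(v_i)=\emptyset]$ — a quantity determined by the graph and by $\rho_1,\dots,\rho_{i-1}$ — the fresh coin at step $i$ is independent of the event $\{I\cap N^-(v_i)=\emptyset\}$, so $\Pr[v_i\in I]=\rho_i q_i$; I set $\rho_i:=\min\{1,\,p(v_i)/q_i\}$, which makes $\Pr[v_i\in I]=\min\{q_i,p(v_i)\}$. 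The whole argument therefore reduces to the \emph{Key Lemma}: $q_i\ge p(v_i)$ for every $i$ — in fact $q_i$ is typically far larger, of order $d^{-1/4}$ in the uniform setting.

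I would prove the Key Lemma by induction along the ordering. Fix $v=v_i$ and let $N^-(v)=\{u_1,\dots,u_k\}$ with $k\le d$. The crucial structural input is that triangle-freeness makes $N^-(v)\subseteq N(v)$ an \emph{independent set}, so the $u_j$ are pairwise non-adjacent; by induction each $\Pr[u_j\in I]\le p(u_j)$, and admissibility gives $\sum_j\Pr[u_j\in I]\le\tfrac14\ln(1/p(v))$. Exposing the process in order, $q_i=\prod_{j=1}^{k}\Pr\big[u_j\notin I\mid u_1,\dots,u_{j-1}\notin I\big]$, and the task is to show that each conditional factor is at least $1-O\!\big(p(u_j)\big)$; multiplying, $q_i\ge\big(1-O(p)\big)^{k}\ge\exp\!\big(-O(1)\textstyle\sum_j p(u_j)\big)\ge p(v)$ once the constants are tuned so that the resulting bound on $\chi_f$ is $(4+o(1))\tfrac{d}{\ln d}$.

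The crux, and the step I expect to be hardest, is bounding $\Pr\big[u_j\in I\mid u_1,\dots,u_{j-1}\notin I\big]$. The events $\{u_j\in I\}$ are genuinely dependent, and (as small examples show) the dependence need not have a helpful sign, so one cannot simply pass to marginals. The way out is to exploit that the $u_j$ are pairwise non-adjacent: conditioning on a set of vertices \emph{none of which is a neighbour of $u_j$} being outside $I$ can inflate the probability that $u_j$'s own backward neighbourhood avoids $I$ by only a bounded factor — a monotonicity-flavoured statement that must itself be proved by induction on the process, and the one place where triangle-freeness is indispensable (for $K_{d+1}$, where backward neighbourhoods are cliques, the scheme collapses to the trivial bound $\chi_f\le d+1$). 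Pinning down this inflation factor cleanly is, I expect, both the technical heart of the argument and the origin of the constant $4$.
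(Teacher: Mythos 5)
Your outer skeleton mirrors the paper's: reduce Theorem~\ref{thm:main} to a weighted local statement (your ``admissible'' $p$, the paper's Theorem~\ref{thm:mainproc}/Theorem~\ref{thm:maingen}) and specialise with a constant weight of order $\frac{\ln d}{4d}$; that reduction is fine. The problem is that your Key Lemma, $q_i\ge p(v_i)$, rests entirely on the unproved bound $\Pr\big[u_j\in I\mid u_1,\dots,u_{j-1}\notin I\big]=O(p(u_j))$, and this is not a finishing detail but the whole content of the theorem. The marginal bound $\Pr[u_j\in I]\le p(u_j)$ from your induction does not transfer to the conditional one: the conditioning event has probability only about $d^{-1/4}$ in your setting, so generic arguments can inflate conditional probabilities by a factor $d^{1/4}$, which swamps $p(u_j)\approx\frac{\ln d}{4d}$ after multiplying out the $k\le d$ factors. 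Nor does pairwise non-adjacency of the $u_j$ by itself tame the dependence: two non-adjacent $u_l,u_j$ may share backward neighbours in a triangle-free graph, conditioning on $u_l\notin I$ biases the status of exactly those shared vertices, and the bias propagates back through the entire history of the greedy process. You acknowledge this (``the dependence need not have a helpful sign'') and defer it to a ``monotonicity-flavoured statement'' that is never formulated precisely, let alone proved, and no induction hypothesis is stated that would survive the conditioning. As it stands the proposal is a plausible plan with its hardest step missing.

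For comparison, the paper does not analyse a plain randomised greedy at all; it changes the process so that the conditioning bias is compensated exactly: each vertex $v_i$ joins with probability $1-e^{-w(v_i)}$, and when it does \emph{not} join, the weights of its later neighbours are \emph{multiplied} by $e^{w(v_i)}$ (and zeroed out when it does join). This multiplicative boost is precisely the ``inflation factor'' you are missing, built into the dynamics. The marginal $\Pr(v_k\in I)$ is then computed by an exact change of measure onto a modified process in which the backward neighbours of $v_k$ never join (Claim~\ref{claim:procrel}), triangle-freeness enters only through the fact that the boosted weights of those backward neighbours form a martingale (Claim~\ref{claim:martingale}), and Markov's inequality plus the elementary bound $1-e^{-y}\ge\frac{1-e^{-\eps}}{\eps}y$ give the constant. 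If you want to salvage your route, you would need either to prove your bounded-inflation claim by a genuinely new argument, or to modify the greedy acceptance probabilities adaptively so that an exact identity of this kind becomes available.
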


As already mentioned, this result is known to have some nice consequences. For instance, a direct application of the theorem gives that any triangle-free graph with minimum degree $d$ contains an induced bipartite subgraph of average degree at least $(\frac14-o(1))\ln d$ (and thus one of minimum degree at least $(\frac18-o(1))\ln d$). We refer to \cite[Theorem~3.1]{Esperet19} for further details on the calculations. This proves \cite[Conjecture~1.5]{Esperet19}, improving on the previously best known bound \cite{Kwan20} by a factor $\Theta(\log \log d)$. We note that the factor $\frac14$ can likely be improved by a more careful analysis, but we do not attempt this here.

In addition, Harris \cite{Harris19} observed that Theorem \ref{thm:main} can be extended to the setting where the triangle-free condition is relaxed to $G$ being \emph{locally sparse}, similar to the extension of the upper bound for the chromatic number of triangle-free graphs presented in \cite{Alon99}. More precisely, we say that a $d$-degenerate graph $G$ has \emph{local triangle bound} $y$ if each vertex in $G$ is the last vertex of at most $y$ triangles, where \emph{last} refers to the degeneracy ordering of the graph. Combining Theorem \ref{thm:main} with \cite[Lemma~6.3]{Harris19}, it follows that
$$ \chi_f(G)= O\left(\frac{d}{\ln(d^2/y)}\right) $$
for any $d$-degenerate graph $G$ with local triangle bound $y$. This in turn proves various relationships between the chromatic number and the triangle count of a graph. We refer to \cite[Section~6]{Harris19} for more details.

In fact, at the cost of slightly weakening the constant $4+o(1)$, Theorem \ref{thm:main} can be seen as a special case of the following generalization of Harris' conjecture.

\begin{theorem}\label{thm:maingen} Let $G$ be a triangle-free graph with a vertex ordering $v_1, v_2,\dots , v_n$. Suppose $p:V(G)\rightarrow [0, 1]$ satisfies
$$ p(v_i) \leq \prod_{v_j \in N_L(v_i)} \left(1-p(v_j)\right)$$
for all vertices $v_i$, where $N_L(v_i)$ denotes the set of neighbors $v_j$ of $v_i$ with $j<i$. Then there exists a probability distribution $\mathcal{I}$ over the independent sets of $G$ such that
%a random independent set $I$ of $G$ such that
$$\mathbb{P}_{I\sim \mathcal{I}}(v_i\in I)\geq \alpha \cdot p(v_i),  $$ for all vertices $v_i$, where $\alpha:=\frac{1-e^{-\frac12}}{2}=.196\dots$
\end{theorem}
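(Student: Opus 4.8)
The natural approach is to build the random independent set $I$ by a sequential/greedy procedure along the vertex ordering $v_1,\dots,v_n$, coupled with independent "activation" coins. Concretely, I would introduce i.i.d. (or suitably chosen) random variables — say, for each vertex $v_i$ a uniform $U_i\in[0,1]$ — and process vertices in order, tentatively adding $v_i$ to $I$ if (a) no earlier neighbor of $v_i$ has already been added, and (b) some event depending on $U_i$ (and possibly on the local history) of probability roughly $p(v_i)$ occurs. The hypothesis $p(v_i)\le \prod_{v_j\in N_L(v_i)}(1-p(v_j))$ is exactly the condition under which, if the events "$v_j\in I$" were independent across $j<i$ with marginal $p(v_j)$, the probability that no left-neighbor is in $I$ would be at least $p(v_i)$; so the plan is to design the process so that the *effective* inclusion probability of $v_i$, conditioned on reaching it, is comparable to $p(v_i)$. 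This is a standard "wasteful" or "permutation/percolation" type argument in the spirit of the literature cited (e.g.\ \cite{Martinsson24,Davies20A}).

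The key difficulty — and where triangle-freeness must enter — is that the events "$v_j\in I$" for $v_j\in N_L(v_i)$ are *not* independent: two left-neighbors $v_j,v_k$ of $v_i$ could themselves be adjacent (if $v_j v_k\in E$), or could share common earlier neighbors, which skews the joint law of "$v_i$ has a left-neighbor in $I$." Triangle-freeness kills the first and most damaging correlation: $N_L(v_i)$ is an independent set, so no two left-neighbors block each other directly. The remaining correlations (through common earlier neighbors) go the "right way" or can be controlled. I expect the heart of the proof to be a lemma of the form: \emph{conditioned on the state of the process just before $v_i$ is examined, the probability that every $v_j\in N_L(v_i)$ was rejected is at least $\prod_{v_j\in N_L(v_i)}(1-p(v_j))$ times a constant}, possibly via an inductive invariant, an FKG/correlation inequality, or by tracking a cleverly defined potential. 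The constant $\alpha=\frac{1-e^{-1/2}}{2}$ strongly suggests the argument loses a factor $1/2$ at one clean place (perhaps from a union bound, or from splitting vertices into two classes, or from a "each vertex is examined with probability $\ge 1/2$" step) and a factor $1-e^{-1/2}$ from a convexity/$\prod(1-x)\ge e^{-\sum x}$-type estimate applied to a sum bounded by $1/2$.

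Once such a per-vertex bound $\Pr(v_i\in I)\ge \alpha\, p(v_i)$ is established for the single random independent set produced by the process, the theorem follows immediately by taking $\mathcal I$ to be the distribution of $I$. So the proof structure I would write is: (1) define the process precisely; (2) verify $I$ is always independent (immediate from the rejection rule); (3) prove by induction on $i$ the key estimate on $\Pr(v_i\in I)$, using triangle-freeness to guarantee $N_L(v_i)$ is independent and hence handle the correlations, and using the hypothesis on $p$ together with an inequality like $1-x\ge e^{-x-x^2}$ or $\prod(1-p_j)\ge e^{-\sum p_j}$; (4) collect the loss into the constant $\alpha$. The main obstacle is step (3) — controlling the joint distribution of the left-neighbors' inclusion events under the dynamics of the greedy process — and I would be prepared for the "right" process to be something slightly cleverer than naive greedy (e.g.\ a two-round exposure, or a process where $v_i$ is activated with probability $p(v_i)/\prod_{v_j\in N_L(v_i)}(1-p(v_j))$ conditioned on survival so that the surviving-and-activated probability telescopes cleanly).
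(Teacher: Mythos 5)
Your proposal correctly identifies the general shape (a sequential random process along the ordering, triangle-freeness used to control the left-neighborhood, and $\mathcal I$ taken to be the law of the resulting set $I$), but it stops exactly at the point where the real idea is needed, and the default process you describe does not work. In the naive greedy scheme --- activate $v_i$ with probability about $p(v_i)$ and add it if no left-neighbor was added --- one gets $\Pr(v_i\in I)=p(v_i)\cdot\Pr(\text{no left-neighbor in }I)$, and the latter factor can be exponentially small while the hypothesis still holds: take $v_k$ with $d$ left-neighbors, each with no earlier neighbors and $p=\tfrac12$; then each enters $I$ independently with probability $\tfrac12$, so $\Pr(v_k\in I)\approx p(v_k)\,2^{-d}$, far below $\alpha\,p(v_k)$ even though $p(v_k)\le 2^{-d}$ is permitted. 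So the obstacle you flag in step (3) is not a technical correlation issue to be handled by FKG or an inductive invariant; the process itself must be changed. The paper's key mechanism, absent from your plan, is a \emph{compensation} rule: each vertex carries a weight, it joins $I$ with probability $1-e^{-w(v_i)}$, and crucially, when it does \emph{not} join, the weights of all its later neighbors are multiplied by $e^{w(v_i)}$ (and set to $0$ if it does join). This boosting exactly offsets the conditioning on all left-neighbors of $v_k$ declining: conditioned on that event (which has probability roughly $e^{-X}$ with $X$ the accumulated left-neighbor weight), the weight of $v_k$ has grown to $w_0(v_k)e^{X}$, so the product $(1-e^{-w_0(v_k)e^{X}})e^{-X}\approx w_0(v_k)$ is insensitive to how small $e^{-X}$ is.

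The analysis then rests on an exact change of measure between the true process and a modified one in which the left-neighbors of the target vertex always take the ``decline and boost'' branch (the paper's Claim \ref{claim:procrel}), the observation that the boosted weights of those left-neighbors form a martingale because triangle-freeness prevents them from being updated by steps inside $N_L(v_k)$ (Claim \ref{claim:martingale}), Markov's inequality applied to $X$ (this is where the factor $\tfrac12$ comes from, not a union bound or a two-class split), and the elementary bound $1-e^{-y}\ge\frac{1-e^{-\varepsilon}}{\varepsilon}\,y$ for $0\le y\le\varepsilon=\tfrac12$ (this is where $1-e^{-1/2}$ comes from). Your closing suggestion of activating $v_i$ with probability $p(v_i)/\prod_{v_j\in N_L(v_i)}(1-p(v_j))$ conditioned on survival is closer in spirit, but it reduces the theorem to showing $\Pr(\text{no left-neighbor in }I)\ge\alpha\prod_{v_j\in N_L(v_i)}(1-p(v_j))$, which is essentially the theorem itself and is left unproved. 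As written, the proposal is a plausible research plan, not a proof: the central device (multiplicative weight boosting with the exponential acceptance rule, enabling the exact tilting identity) is missing, and without it the outlined process fails on simple examples.
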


It is not too hard to see that this statement implies Harris' conjecture. %by considering the natural vertex ordering of any $d$-degenerate graph and putting $p(v_i) = \Theta\left(\frac{\ln d}{d}\right)$.
Moreover, by ordering the vertices of any triangle-free graph decreasingly by their degrees it follows that the conditions of Theorem \ref{thm:maingen} are satisfied for $p(v_i)=\Theta\left( \frac{\ln d(v_i)}{d(v_i)}\right)$, where $d(v_i)$ denotes the degree of $v_i$. This recovers, up to constant factors, the so-called \emph{local Shearer bound} as conjectured by Kelly and Postle \cite[Conjecture~2.2]{Kelly24} and recently proven by the author and Steiner \cite[Theorem~1.2]{Martinsson24}. Beyond this, Theorem \ref{thm:maingen} appears to be a very natural extension of Harris' conjecture which may be of independent interest.

Finally, given the resolution of Harris' conjecture, a natural remaining question is to determine the optimal leading constant $C$ for the problem. In particular, by combining Theorem \ref{thm:main} with \cite{Bollobas81}, we know that $\frac12\leq C \leq 4.$ It would appear that the most reasonable answer is $C=1$. We state this as a conjecture.
\begin{conjecture} The following holds for any sufficiently large $d$.
\begin{enumerate}[(i)]
    \item $\chi_f(G)\leq (1+o(1))\frac{d}{\ln d}$ for all $d$-degenerate triangle-free graphs $G$.
    \item There exists a $d$-degenerate triangle-free graph $G$ such that $\chi_f(G)\geq (1-o(1))\frac{d}{\ln d}.$
\end{enumerate}
\end{conjecture}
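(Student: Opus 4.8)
The conjecture has two halves of very different character; I will sketch a plan for each. Part (ii) is the more approachable: it asks for a construction witnessing near-optimality, and the natural candidate is a random graph. The plan is to take $G=G(n,p)$ with $p$ tuned so that the typical degeneracy is $d$ — concretely $p\sim d/n$ up to lower-order corrections — and then delete a vertex from every triangle to make it triangle-free; since $p=o(n^{-1/2})$ the expected number of triangles is $o(n)$, so this deletion changes $n$, the degree sequence, and hence the degeneracy only by a $1+o(1)$ factor. The heart of the argument is a lower bound $\chi_f(G(n,p))\ge(1-o(1))\frac{\ln(np)}{\ln\ln(np)}\cdot\dots$ — wait, that is not quite $d/\ln d$. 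In fact the independence number of $G(n,p)$ for $p=d/n$ is $(1+o(1))\frac{2n\ln d}{d}$, so every independent set covers at most this many vertices and therefore $\chi_f\ge n/\alpha(G)\ge(1-o(1))\frac{d}{2\ln d}$. This only gives the constant $\tfrac12$, matching the existing lower bound from \cite{Bollobas81}, not $1-o(1)$. So part (ii) as stated genuinely requires a cleverer construction — perhaps a blow-up or a Kneser-type or shift-graph-type construction layered on top of a sparse pseudorandom host, or an explicit triangle-free degenerate graph whose fractional chromatic number is pinned down exactly by an LP-duality (clique-cover/fractional-clique) certificate. Identifying the right extremal example is the main obstacle for (ii).

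For part (i), the plan is to revisit the proof of Theorem~\ref{thm:main} and track where the factor $4$ is lost, then optimize each source of slack. There are, I expect, three multiplicative losses: one factor roughly $2$ coming from the constant $\alpha=\frac{1-e^{-1/2}}{2}$ in Theorem~\ref{thm:maingen} (since an idealized greedy/independent process should give marginals close to $p(v)$ rather than $\alpha\, p(v)$), and a further factor roughly $2$ from the choice of the target function $p(v)$ itself — the condition $p(v_i)\le\prod_{v_j\in N_L(v_i)}(1-p(v_j))$ forces $p(v)\approx\frac{\ln d}{d}$ rather than $\frac{2\ln d}{d}$, i.e. it ``sees'' only the back-degree in the degeneracy ordering and cannot exploit that a random independent set in a triangle-free graph of max degree $d$ achieves density $\frac{(1+o(1))\ln d}{d}$ using \emph{all} neighbors, not just earlier ones. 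The programme would be: (a) replace the crude product bound by a sharper two-sided estimate, showing that the greedy activation process retains marginals $(1-o(1))p(v)$ provided $\sum_{v_j\in N_L(v_i)}p(v_j)=(1+o(1))\ln d$; (b) show that any $d$-degenerate triangle-free graph admits such a $p$ with $p(v_i)=(1-o(1))\frac{\ln d}{d}$ on all but a negligible set of vertices, handling low-degree or ``early'' vertices separately by a cleanup/defect-coloring argument; and (c) combine (a) and (b) via the standard reduction (iterate the random independent set, or pass to the LP dual) to conclude $\chi_f(G)\le(1+o(1))\frac{d}{\ln d}$.

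The key technical step, and the one I expect to be hardest, is (a): pushing the constant in the independent-set marginal bound from $\alpha\approx 0.196$ all the way to $1-o(1)$. This is essentially asking for an entropy-compression or Rosenfeld-counting argument, or a Molloy-style ``wasteful colouring'' analysis, that is tight rather than merely order-optimal — the same phenomenon that separates Johansson's $O(\Delta/\log\Delta)$ from Molloy's $(1+o(1))\Delta/\ln\Delta$. One concrete route is to run the activation process in rounds and, using triangle-freeness, show that the neighbourhood of an as-yet-undecided vertex $v$ behaves like an almost-uniformly-random subset of size $\approx$ back-degree, so that the survival probability of $v$ after the full ordering is $\prod(1-\text{marginal})\approx e^{-\ln d}=d^{-1}$ with the right constant in front; making the ``almost-uniform'' heuristic rigorous, with errors that are genuinely $o(1)$ and not just $O(1)$, is the crux. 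A second, perhaps cleaner, route is to prove (i) directly for the hard-core model at the right fugacity $\lambda\sim\frac{\ln d}{d}$ on triangle-free degenerate graphs, mirroring the occupancy-method results of \cite{Davies18,Davies25} but in the degenerate rather than bounded-degree regime; the obstruction there is that the occupancy method is naturally a bounded-degree tool and adapting its local-to-global optimization to a degeneracy ordering is not routine. In all approaches, the low-degree vertices (where $\frac{\ln d(v)}{d(v)}$ is not the binding constraint) must be stripped off and coloured by a bounded-degree result such as Molloy's, contributing only to the $o(1)$ error. Combining this with a matching lower-bound construction for (ii) — which I suspect will come from a pseudorandom triangle-free graph with an explicit fractional-clique-cover lower bound rather than from $G(n,p)$ alone — would settle the conjecture.
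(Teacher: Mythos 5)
The statement you are addressing is not a theorem of the paper but an open conjecture: the paper explicitly records only that $\tfrac12\leq C\leq 4$, the lower bound coming from \cite{Bollobas81} and the upper bound from Theorem \ref{thm:main}, and poses $C=1$ as a question for future work. There is therefore no proof in the paper to compare against, and your text is not a proof either --- it is a research programme, and by your own account both halves stop exactly at the open content of the conjecture. For part (ii), your random-graph computation $\chi_f\geq n/\alpha(G)\geq(1-o(1))\frac{d}{2\ln d}$ only reproduces the known constant $\tfrac12$ (the independence-number bound in $G(n,p)$ with $p=d/n$ is $(2+o(1))\frac{n\ln d}{d}$, so this route cannot do better), and the ``cleverer construction'' you defer to is precisely what is missing; note also that it is not even clear that the truth for (ii) is $1-o(1)$ rather than $\tfrac12+o(1)$, since the bounded-degree analogue of the lower bound has been stuck at $\tfrac12$ since \cite{Bollobas81}.

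For part (i), your step (a) --- upgrading the marginal guarantee of Theorem \ref{thm:maingen} from $\alpha\approx 0.196$ (or the $\tfrac14$ mentioned in the paper's closing remark) to $1-o(1)$ --- is the entire difficulty, and the sketch does not engage with the actual obstruction. In the paper's process the factor losses are structural: one factor of $2$ comes from the Markov bound $\mathbb{P}(X\geq 2\mathbb{E}X)\leq\tfrac12$ in the proof of Theorem \ref{thm:mainproc}, and another from the fact that the constraint $p(v_i)\leq\prod_{v_j\in N_L(v_i)}(1-p(v_j))$ caps $p$ at roughly $\frac{\ln d}{d}$ rather than $\frac{(1+o(1))\ln d}{d}$ with the right second-order term; removing both simultaneously requires controlling the full distribution of $X$ (not just its mean) and a genuinely tight analysis of the kind that separates Johansson-type from Molloy-type bounds, which you flag as ``the crux'' but do not supply. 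In short, the proposal is a reasonable map of where the slack lies, but every load-bearing step --- your (a), (b), and the extremal example for (ii) --- is left as an open problem, which is exactly the status the paper itself assigns to this conjecture.
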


\section{Proof of Theorems \ref{thm:main} and \ref{thm:maingen}}
Let $G$ be a triangle-free graph with vertices $v_1, \dots, v_n$. Let $N_L(v_i)$ denote the set of neighbors $v_j$ of $v_i$ where $j<i$ and let $N_R(v_i)$ denote the set of neighbors $v_j$ of $v_i$ where $j>i$. Consider the following process:

Let $w_0:V(G)\rightarrow \mathbb{R}_{> 0}$ be an assignment of positive weights to the vertices of $G$. Initially assign vertices the weights $w(v_i)=w_0(v_i)$ for all $1\leq i\leq n$. Then for each step $i$ in $1, 2, \dots, n$ do the following.
\begin{itemize}
    \item With probability $1-e^{-w(v_i)}$, put $w(v_j)=0$ for all $v_j\in N_R(v_i)$.
    \item With probability $e^{-w(v_i)}$, multiply the weight of all $v_j\in N_R(v_i)$ by $e^{w(v_i)}$.
\end{itemize}

Let $I$ be the set of vertices $v_i$ for which the first option occurred. It is easy to see that $I$ is an independent set. If $v_i\in I$, then at step $i$ all vertices $v_j\in N_R(v_i)$ get assigned the weight $0$ for the rest of the process, which means they enter the independent set with probability $1-e^{-0}=0.$

Theorems \ref{thm:main} and \ref{thm:maingen} are both direct consequences of the following result.
\begin{theorem}\label{thm:mainproc} For any weight function $w_0:V(G)\rightarrow \mathbb{R}_{> 0}$ the following holds. For any vertex $v_k\in V(G)$ and any $\varepsilon>0$, if $$w_0(v_k) \exp\left(2\sum_{v_i\in N_L(v_k)} w_0(v_i)\right)\leq \varepsilon,$$ then
$$\mathbb{P}(v_k\in I) \geq \frac{1-e^{-\varepsilon}}{2\varepsilon}w_0(v_k).$$
\end{theorem}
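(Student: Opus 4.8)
The plan is to analyse the (random) weight $W_k$ that vertex $v_k$ carries at the moment step $k$ is performed. Since $v_k$ joins $I$ precisely when the first option is taken at step $k$, we have $\mathbb{P}(v_k\in I)=\mathbb{E}[1-e^{-W_k}]$. Write $N_L(v_k)=\{u_1<\dots<u_m\}$; this is an independent set, since any edge inside it would form a triangle with $v_k$. For $u\in N_L(v_k)$ let $S_u$ be the indicator that the second option is taken at step $u$, let $W_u$ be the weight of $u$ at step $u$, and let $R_u:=S_ue^{W_u}$ be the factor by which step $u$ rescales the weight of each right-neighbour of $u$ (it is $0$ if the first option is taken, $e^{W_u}$ otherwise). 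Tracking $v_k$'s weight through the process gives $W_k=w_0(v_k)\prod_{j=1}^m R_{u_j}$. The computation driving everything is $\mathbb{E}[R_u\mid\text{history just before step }u]=e^{W_u}\cdot e^{-W_u}=1$, so the partial products $\Phi_j:=\prod_{l\le j}R_{u_l}$ form a nonnegative martingale with $\Phi_0=1$; in particular $\mathbb{E}[W_k]=w_0(v_k)$. Note also that $\Phi_m=e^{T}\mathbf 1_B$, where $T:=\sum_j W_{u_j}\ge 0$ and $B$ is the event that the second option is taken at every step in $N_L(v_k)$.

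First I would reduce to a tail estimate. For all $x\ge0$ one has $1-e^{-x}\ge\frac{1-e^{-\varepsilon}}{\varepsilon}\min(x,\varepsilon)$ (concavity on $[0,\varepsilon]$, monotonicity beyond), whence
$$\mathbb{P}(v_k\in I)\ \ge\ \frac{1-e^{-\varepsilon}}{\varepsilon}\,\mathbb{E}[\min(W_k,\varepsilon)]\ =\ \frac{1-e^{-\varepsilon}}{\varepsilon}\Bigl(w_0(v_k)-\mathbb{E}\bigl[(W_k-\varepsilon)^+\bigr]\Bigr).$$
So it suffices to prove $\mathbb{E}[(W_k-\varepsilon)^+]\le\tfrac12 w_0(v_k)$. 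Set $T_0:=\sum_j w_0(u_j)$ and $L:=\ln(\varepsilon/w_0(v_k))$, so the hypothesis is exactly $L\ge 2T_0$. Since $W_k=w_0(v_k)\Phi_m$ and $\{\Phi_m>e^{L}\}\subseteq\{T>L\}$ (on that event $\Phi_m>0$, so $\Phi_m=e^{T}$), we get $(W_k-\varepsilon)^+\le w_0(v_k)\,\Phi_m\mathbf 1[T>L]$, and it remains to show $\mathbb{E}[\Phi_m\mathbf 1[T>L]]\le\tfrac12$.

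The key step is a change of measure. Because $\Phi_m\ge0$ and $\mathbb{E}[\Phi_m]=1$, define $\mathbb{Q}$ by $\mathrm d\mathbb{Q}=\Phi_m\,\mathrm d\mathbb{P}$; then $\mathbb{E}[\Phi_m\mathbf 1[T>L]]=\mathbb{Q}(T>L)$, and by Markov (assuming $N_L(v_k)\neq\varnothing$, the other case being trivial) it is enough to verify $\mathbb{E}_{\mathbb{Q}}[T]\le T_0$. Now $\Phi_m=\prod_j R_{u_j}$ is exactly the product of the sequential Radon–Nikodym increments at the steps of $N_L(v_k)$, the increment at step $u$ being $R_u=\mathbf 1[\text{second option at }u]/e^{-W_u}$; hence $\mathbb{Q}$ is simply the law of the \emph{same} process run with the second option forced at every step in $N_L(v_k)$ and left untouched elsewhere. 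Fix $u_j\in N_L(v_k)$. By triangle-freeness $u_j$ has no neighbour in $N_L(v_k)$ (such a common neighbour would again give a triangle with $v_k$), so none of the steps that modify $u_j$'s weight is a forced step; at each of those steps $u_j$'s weight is multiplied by a factor of conditional $\mathbb{Q}$-mean $1$, exactly as under $\mathbb{P}$. Therefore $u_j$'s weight is a $\mathbb{Q}$-martingale started at $w_0(u_j)$, so $\mathbb{E}_{\mathbb{Q}}[W_{u_j}]=w_0(u_j)$, and summing, $\mathbb{E}_{\mathbb{Q}}[T]=T_0$. (One can also prove $\mathbb{E}_{\mathbb{P}}[\Phi_m W_{u_j}]=w_0(u_j)$ directly, peeling the factors $R_{u_l}$ one at a time and using, via the martingale of $u_j$'s weight, that the coin at step $u_l$ never affects $W_{u_j}$.)

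The step I expect to be the real obstacle — and the only one requiring care — is the identity $\mathbb{E}_{\mathbb{Q}}[T]=T_0$. A priori the tilt $\Phi_m$ biases toward large weights along the in-neighbourhood of $v_k$, and one might worry that through shared ancestors of the $u_j$'s this inflates $\mathbb{E}[W_{u_j}]$; it is precisely triangle-freeness that prevents this, by forcing the tilted (forced) steps to be disjoint from each $N_L(u_j)$ and so keeping every $W_{u_j}$ mean-preserving under $\mathbb{Q}$. Everything else — the product formula for $W_k$, the martingale property of $\Phi$, and the concavity reduction — is routine.
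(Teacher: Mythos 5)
Your argument is correct and essentially coincides with the paper's proof: the tilted measure $\mathbb{Q}$ with $\mathrm{d}\mathbb{Q}=\Phi_m\,\mathrm{d}\mathbb{P}$ is precisely the modified (forced) process $\tilde w$ underlying Claim~\ref{claim:procrel}, your computation $\mathbb{E}_{\mathbb{Q}}[T]=T_0$ via triangle-freeness is exactly Claims~\ref{claim:martingale} and~\ref{claim:EX}, and the finish by Markov's inequality plus the concavity bound on $1-e^{-x}$ is the paper's final step. The only difference is cosmetic bookkeeping: you write $\mathbb{E}[\min(W_k,\varepsilon)]=w_0(v_k)-\mathbb{E}[(W_k-\varepsilon)^+]$, whereas the paper restricts the expectation in Claim~\ref{claim:vkinI} to the event $X<2\sum_{v_i\in N_L(v_k)}w_0(v_i)$, which has probability at least $\tfrac12$; both give the same constant.
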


Fix a vertex $v_k$. For the remainder of this section, we will work to compute a lower bound on the probability that $v_k\in I$. Thus proving Theorem \ref{thm:mainproc}.

In order to analyze this, let us consider a modified process. Initially assign the vertices weights $\tilde{w}(v_i)=w_0(v_i)$ for all $1\leq i\leq n$. Then for each step $i$ in $1, 2, \dots, k-1$, do the following.
\begin{itemize}
    \item If $v_i\not\in N_L(v_k)$, do the same update rule as for $w$.
    \item If $v_i\in N_L(v_k)$, multiply the weight of all vertices $v_j\in N_R(v_i)$ by $e^{\tilde{w}(v_i)}$.
\end{itemize}
In other words, $\tilde{w}$ has the same update rule as $w$ for any step $i$ where $v_i\not\in N_L(v_k)$. For any step $i$ where $v_i\in N_L(v_k)$, the process follows the update rule of the second bullet point of $w$ with probability $1$.

Let us denote by $w_i(v_j)$ and $\tilde{w}_i(v_j)$ the weight of $v_j$ after step $i$ in the respective processes, let $\tilde{w}_0(v_j):=w_0(v_j)$, and let $$X:=\sum_{v_i\in N_L(v_k)} \tilde{w}_{k-1}(v_i).$$ By construction of $\tilde{w}$, we have the following relation to $w$.
\begin{claim}\label{claim:procrel} For any function $f$ such that $f(0)=0$ we have
$$\mathbb{E}\left[f(w_{k-1}(v_k))\right]=\mathbb{E}\left[f(\tilde{w}_{k-1}(v_k)) e^{-X} \right]. $$
\end{claim}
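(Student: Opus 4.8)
The plan is to build an explicit coupling of the processes $w$ and $\tilde w$ and to isolate the event on which the two coincide. Label $N_L(v_k)=\{v_{i_1},\dots,v_{i_m}\}$ with $i_1<\dots<i_m<k$, and drive both processes by the same family of independent uniform $[0,1]$ variables $U_1,\dots,U_n$: at step $i$ the process $w$ takes the first (``kill'') option precisely when $U_i<1-e^{-w_{i-1}(v_i)}$, while $\tilde w$ takes the first option precisely when $v_i\notin N_L(v_k)$ and $U_i<1-e^{-\tilde w_{i-1}(v_i)}$, so that (as stipulated) $\tilde w$ never kills at a left-neighbour of $v_k$. Set $\omega:=(U_i)_{v_i\notin N_L(v_k)}$. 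The first observation I would record is that the entire process $\tilde w$ is a deterministic function of $\omega$ alone; in particular $\tilde w_{k-1}(v_k)$ and $X=\sum_{v_i\in N_L(v_k)}\tilde w_{k-1}(v_i)$ are $\omega$-measurable. Let $E$ denote the event that, in $w$, the second (``boost'') option is taken at every step $i$ with $v_i\in N_L(v_k)$.

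The core is a single induction over the steps $\ell=0,1,\dots,k-1$ establishing two facts at once. First: restricted to the event that $w$ boosts at every left-neighbour of $v_k$ that occurs among the first $\ell$ steps, the two processes have produced identical weight vectors after step $\ell$ --- at a step $v_\ell\notin N_L(v_k)$ both apply the same update driven by the same $U_\ell$ once their weights agree, and at a step $v_\ell\in N_L(v_k)$ the conditioning forces $w$ to boost, which on matched weights is exactly the deterministic update of $\tilde w$. Second, and as a consequence: conditionally on $\omega$ and on the event that $w$ has boosted at $v_{i_1},\dots,v_{i_{j-1}}$, the weight $w_{i_j-1}(v_{i_j})$ equals the $\omega$-measurable constant $\tilde w_{i_j-1}(v_{i_j})$, and since the weight of $v_{i_j}$ is never modified after step $i_j-1$ this constant is just $\tilde w_{k-1}(v_{i_j})$; hence the $j$-th boost occurs with conditional probability exactly $e^{-\tilde w_{k-1}(v_{i_j})}$, the variable $U_{i_j}$ being independent of $\omega$ and of the earlier choices. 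Here I would be careful to keep the conditioning acyclic: the outcome at step $i_j$ is only determined once steps $1,\dots,i_j-1$ have run, and those depend solely on $\omega$ and on the outcomes at $v_{i_1},\dots,v_{i_{j-1}}$. Multiplying the $m$ conditional probabilities by the chain rule then gives $\mathbb{P}(E\mid\omega)=\exp\!\big(-\sum_{v_i\in N_L(v_k)}\tilde w_{k-1}(v_i)\big)=e^{-X}$.

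Given all this the conclusion is immediate. On $E^{c}$ some left-neighbour of $v_k$ kills, so $w_{k-1}(v_k)=0$ (the weight stays $0$ thereafter), whence $f(w_{k-1}(v_k))=f(0)=0$ on $E^{c}$; on $E$ the weights agree, so $w_{k-1}(v_k)=\tilde w_{k-1}(v_k)$. Therefore, conditioning on $\omega$ and using that $f(\tilde w_{k-1}(v_k))$ is $\omega$-measurable,
\begin{align*}
\mathbb{E}\big[f(w_{k-1}(v_k))\big]
&=\mathbb{E}\big[f(\tilde w_{k-1}(v_k))\,\mathbf{1}_E\big]\\
&=\mathbb{E}\big[f(\tilde w_{k-1}(v_k))\,\mathbb{P}(E\mid\omega)\big]
=\mathbb{E}\big[f(\tilde w_{k-1}(v_k))\,e^{-X}\big].
\end{align*}
I expect the main obstacle to be the bookkeeping in the second half of the induction --- verifying that the conditional boost probabilities met along $E$ multiply to exactly $e^{-X}$, which forces one to track carefully how conditioning on earlier boosts keeps the weights of the left-neighbours of $v_k$ synchronised with the deterministic process $\tilde w$. (It is perhaps worth noting that triangle-freeness of $G$ does not seem to be needed for this particular claim; it presumably enters only in the remaining estimates leading to Theorem~\ref{thm:mainproc}.)
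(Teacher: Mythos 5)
Your proof is correct. It establishes the claim by the same underlying comparison as the paper --- the two key facts in both arguments are that $f(w_{k-1}(v_k))$ vanishes unless the second (boost) option is taken at every step of $N_L(v_k)$, and that the ``cost'' of those forced boosts is exactly $e^{-X}$ because the weight of each $v_i\in N_L(v_k)$ is frozen after step $i-1$, so $\tilde w_{i-1}(v_i)=\tilde w_{k-1}(v_i)$. The packaging differs: the paper runs an explicit change of measure, summing over choice sequences $a\in\{1,2\}^{k-1}$ and computing the likelihood ratio $\mathbb{P}_w(a)/\mathbb{P}_{\tilde w}(a)=e^{-X}$ (which requires it to discard separately the zero-probability sequences with kills at two adjacent vertices), whereas you build a coupling through shared uniforms, observe that $\tilde w$ and hence $X$ are measurable with respect to $\omega=(U_i)_{v_i\notin N_L(v_k)}$, show the trajectories coincide on the event $E$ of boosts at all of $N_L(v_k)$, and compute $\mathbb{P}(E\mid\omega)=e^{-X}$ by the chain rule. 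Your version buys a slightly cleaner treatment of the degenerate sequences (they never need to be mentioned, since $E^c$ together with $f(0)=0$ absorbs everything), at the price of a little more measure-theoretic bookkeeping in the acyclic conditioning you rightly flag; the paper's version is shorter once one accepts the sequence encoding. Your parenthetical observation is also accurate: triangle-freeness plays no role in this claim and enters only later, in Claim~\ref{claim:martingale}, where it guarantees that no $v_j\in N_L(v_i)$ with $v_i\in N_L(v_k)$ lies in $N_L(v_k)$.
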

\begin{proof} We can encode each possible sequence of weight functions $(w_0, w_1, \dots, w_{k-1})$ of the processes $w$ as a sequence $a\in \{1,2\}^{k-1}$ where $a_i$ denotes whether, in step $i$, randomness chooses the first or the second bullet point. In other words, $a_i=1$ if and only if $v_i\in I$.

Note that if $a_i=1$ for any index $i$ where $v_i\in N_L(v_k)$ then this sequence will result in $w_{k-1}(v_k)=0$. Thus, such a sequence does not contribute to the value of $\mathbb{E}\left[f(w_{k-1}(v_k))\right]$. Similarly, if $a_i=a_j=1$ for any two neighboring vertices $v_i$ and $v_j$ then, the probability of the corresponding sequence is $0$, which means it also does not contribute to $\mathbb{E}\left[f(w_{k-1}(v_k))\right]$.

Let $A\subseteq \{1,2\}^{k-1}$ denote the set of sequences that do not match either of the aforementioned conditions. Then any $a\in A$ can be interpreted as a possible sequence of weight functions $(w_0, \dots, w_{k-1})$ and $(\tilde{w}_0, \dots, \tilde{w}_{k-1})$ produced by either process $w$ or $\tilde{w}$. Note that, by definition of $w$ and $\tilde{w}$, the same sequence of choices $a$ will produce the same sequence of weight functions in either process. Let us denote this common sequence by $w^a$, and let us denote by $\mathbb{P}_w(a)$ and $\mathbb{P}_{\tilde{w}}(a)$ the probabilities that the sequence of choices of the respective processes equals $a$.

By comparing the transition probabilities of $w$ and $\tilde{w}$, we immediately get
\begin{align*}
    \frac{\mathbb{P}_w(a)}{\mathbb{P}_{\tilde{w}}(a)} &= \exp\left(-\sum_{v_i\in N_L(v_k)} w^a_{i-1}(v_i)\right) =\exp\left(-\sum_{v_i\in N_L(v_k)} w^a_{k-1}(v_i)\right)
\end{align*}
for all $a\in A$, where the last equality follows by observing that no vertex $v_i$ has its weight updated after step $i-1$. Thus
\begin{align*}
    \mathbb{E}[f(w_{k-1}(v_k)]
    &=\sum_{a\in A} f(w^a_{k-1}(v_k)) \mathbb{P}_w(a)\\ 
    &=\sum_{a\in A} f(w^a_{k-1}(v_k))\exp\left(-\sum_{v_i\in N_L(v_k)} w^a_{k-1}(v_i)\right)\mathbb{P}_{\tilde{w}}(a)\\
    &=\mathbb{E}[f(\tilde{w}_{k-1}(v_k))e^{-X} ].
\end{align*}
\end{proof}

\begin{claim}\label{claim:martingale} Suppose $v_i\in N_L(v_k)$. Then $\tilde{w}_t(v_i)$ is a martingale in $t$ for $t=0, \dots, k-1$.
\end{claim}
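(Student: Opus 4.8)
The plan is to fix $v_i\in N_L(v_k)$ and verify the martingale property step by step with respect to the natural filtration $\mathcal{F}_t$ generated by the random choices made in steps $1,\dots,t$ of the modified process. Note that $\tilde{w}_t(v_i)$ is $\mathcal{F}_t$-measurable, and that every weight occurring in the process stays bounded by a deterministic constant depending only on $w_0$ and $G$ (each weight is $w_0$ of some vertex times finitely many factors of the form $e^{\tilde{w}_{s-1}(v_s)}$, which one bounds by induction on the vertex index), so integrability is automatic. It therefore suffices to prove that $\mathbb{E}[\tilde{w}_t(v_i)\mid\mathcal{F}_{t-1}]=\tilde{w}_{t-1}(v_i)$ for each $t\in\{1,\dots,k-1\}$.

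The weight of $v_i$ is modified at step $t$ only when $v_t\in N_L(v_i)$; for every other $t$ we have $\tilde{w}_t(v_i)=\tilde{w}_{t-1}(v_i)$ and there is nothing to check. The crucial observation is that no such step uses the deterministic update rule of $\tilde{w}$: if some $v_t\in N_L(v_i)$ were also in $N_L(v_k)$, then $v_t\sim v_i$, $v_i\sim v_k$, and $v_t\sim v_k$ would form a triangle, contradicting the hypothesis on $G$. Hence every step $t\leq k-1$ at which $\tilde{w}(v_i)$ changes obeys the original randomized rule. Writing $c=\tilde{w}_{t-1}(v_t)$ (an $\mathcal{F}_{t-1}$-measurable quantity) and using that $v_i\in N_R(v_t)$, we see that conditionally on $\mathcal{F}_{t-1}$ the weight $\tilde{w}_t(v_i)$ equals $0$ with probability $1-e^{-c}$ and equals $e^{c}\tilde{w}_{t-1}(v_i)$ with probability $e^{-c}$, so
\[
\mathbb{E}[\tilde{w}_t(v_i)\mid\mathcal{F}_{t-1}]=(1-e^{-c})\cdot 0+e^{-c}\cdot e^{c}\,\tilde{w}_{t-1}(v_i)=\tilde{w}_{t-1}(v_i).
\]
Combining the two cases gives the claim.

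I do not expect a genuine obstacle here; the only point requiring care is the role of triangle-freeness, which is exactly what prevents $v_i$ from being hit by one of the deterministic — and hence expectation-increasing — updates before step $k$, and this is a one-line argument. Everything else is the routine check that the randomized rule is designed so that the multiplicative factor $e^{c}$ compensates precisely for the survival probability $e^{-c}$, making each relevant step fair. In particular this martingale property will give $\mathbb{E}[\tilde{w}_{k-1}(v_i)]=w_0(v_i)$, and hence $\mathbb{E}[X]=\sum_{v_i\in N_L(v_k)}w_0(v_i)$, which is presumably how the hypothesis of Theorem \ref{thm:mainproc} will be brought to bear on $X$ in the sequel.
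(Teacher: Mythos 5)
Your proof is correct and follows essentially the same argument as the paper: you identify that $\tilde{w}(v_i)$ changes only at steps $t$ with $v_t\in N_L(v_i)$, use triangle-freeness to rule out $v_t\in N_L(v_k)$ so that the randomized rule applies, and verify that the update preserves the conditional expectation. The only difference is that you spell out the filtration, integrability, and the expectation computation that the paper leaves as ``easy to see.''
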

\begin{proof} By definition of $\tilde{w}$, the only steps $j$ where the value of $\tilde{w}(v_i)$ is updated are those where $v_j\in N_L(v_i)$. Note that $v_j\not\in N_L(v_k)$ as otherwise $v_i, v_j, v_k$ would form a triangle. Thus $\tilde{w}(v_i)$ is updated according to
$$ \tilde{w}_j(v_i) = \begin{cases} 0&\text{ with probability }1-e^{-\tilde{w}_{j-1}(v_j)}\\ \tilde{w}_{j-1}(v_i)e^{\tilde{w}_{j-1}(v_j)}&\text{ with probability }e^{-\tilde{w}_{j-1}(v_j)}.\end{cases}$$
It is easy to see that this is preserved in expectation.
\end{proof}

\begin{claim}\label{claim:EX} $$\mathbb{E}X = \sum_{v_i \in N_L(v_k)} w_0(v_i).$$
\end{claim}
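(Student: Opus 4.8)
The plan is to derive this immediately from Claim \ref{claim:martingale} together with linearity of expectation. First I would recall that, by Claim \ref{claim:martingale}, for each $v_i\in N_L(v_k)$ the sequence $\tilde{w}_0(v_i),\tilde{w}_1(v_i),\dots,\tilde{w}_{k-1}(v_i)$ is a martingale in $t$. Since the initial value $\tilde{w}_0(v_i)=w_0(v_i)$ is deterministic, the defining property of a martingale gives $\mathbb{E}[\tilde{w}_{k-1}(v_i)]=\tilde{w}_0(v_i)=w_0(v_i)$; here integrability is not an issue because $N_L(v_k)$ is finite and, the process being finite, each $\tilde{w}_t(v_i)$ takes only finitely many values (one per choice sequence).

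Then I would conclude by summing over $v_i\in N_L(v_k)$ and applying linearity of expectation:
\[ \mathbb{E}X=\mathbb{E}\!\left[\sum_{v_i\in N_L(v_k)}\tilde{w}_{k-1}(v_i)\right]=\sum_{v_i\in N_L(v_k)}\mathbb{E}[\tilde{w}_{k-1}(v_i)]=\sum_{v_i\in N_L(v_k)}w_0(v_i). \]

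There is essentially no obstacle here, so the main "work" is simply to invoke the previous claim cleanly. If one prefers not to phrase it through martingale terminology at all, the same statement follows by a one-line induction on $t$ using the update rule displayed in the proof of Claim \ref{claim:martingale}: conditioning on the weights after step $j-1$ (for $v_j\in N_L(v_i)$, the only steps that change $\tilde{w}(v_i)$) gives
\[ \mathbb{E}\big[\tilde{w}_j(v_i)\,\big|\,\tilde{w}_{j-1}\big]=\big(1-e^{-\tilde{w}_{j-1}(v_j)}\big)\cdot 0+e^{-\tilde{w}_{j-1}(v_j)}\cdot \tilde{w}_{j-1}(v_i)\,e^{\tilde{w}_{j-1}(v_j)}=\tilde{w}_{j-1}(v_i), \]
and iterating from $t=0$ to $t=k-1$ yields $\mathbb{E}[\tilde{w}_{k-1}(v_i)]=w_0(v_i)$, whence the claim by linearity.
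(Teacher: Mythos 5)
Your proposal is correct and matches the paper's proof: the paper likewise deduces $\mathbb{E}X=\sum_{v_i\in N_L(v_k)}\mathbb{E}\tilde{w}_{k-1}(v_i)=\sum_{v_i\in N_L(v_k)}\mathbb{E}\tilde{w}_{0}(v_i)$ directly from Claim \ref{claim:martingale} and linearity of expectation. Your optional induction phrasing is just an unfolding of the same martingale argument, so there is nothing further to add.
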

\begin{proof} By Claim \ref{claim:martingale}, $\mathbb{E}X=\sum_{v_i\in N_L(v_k)}\mathbb{E}\tilde{w}_{k-1}(v_i) = \sum_{v_i\in N_L(v_k)}\mathbb{E}\tilde{w}_{0}(v_i). $
\end{proof}

\begin{claim}\label{claim:finalweight} $$\tilde{w}_{k-1}(v_k)=w_0(v_k) e^X.$$
\end{claim}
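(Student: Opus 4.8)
The plan is to simply track how the weight of $v_k$ evolves in the modified process $\tilde w$. The key observation is that $\tilde w(v_k)$ is only ever updated at a step $i$ for which $v_k \in N_R(v_i)$, i.e.\ for which $v_i$ is a left-neighbor of $v_k$; and by definition of $\tilde w$, every such step (having $v_i \in N_L(v_k)$) applies the deterministic multiplicative rule, scaling $\tilde w(v_k)$ by $e^{\tilde w_{i-1}(v_i)}$. Multiplying these factors over all steps $i=1,\dots,k-1$ in order, I would obtain
$$ \tilde w_{k-1}(v_k) \;=\; w_0(v_k)\prod_{v_i \in N_L(v_k)} e^{\tilde w_{i-1}(v_i)} \;=\; w_0(v_k)\exp\!\Big(\sum_{v_i \in N_L(v_k)} \tilde w_{i-1}(v_i)\Big). $$

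Next I would argue that $\tilde w_{i-1}(v_i) = \tilde w_{k-1}(v_i)$ for every $v_i \in N_L(v_k)$. Indeed, the weight of a vertex $v_i$ is only modified at steps $j$ with $v_i \in N_R(v_j)$, i.e.\ $j<i$, so its value is frozen from step $i-1$ onward; since $i<k$ this gives $\tilde w_{i-1}(v_i) = \tilde w_{k-1}(v_i)$ — the same observation already used at the end of the proof of Claim~\ref{claim:EX}. Substituting into the display above and recalling $X = \sum_{v_i \in N_L(v_k)} \tilde w_{k-1}(v_i)$ then yields $\tilde w_{k-1}(v_k) = w_0(v_k) e^{X}$.

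I do not expect any real obstacle here: the statement is a bookkeeping identity about the deterministic behaviour of $\tilde w$ along the left-neighbourhood of $v_k$, and — unlike Claim~\ref{claim:martingale} — it does not even use triangle-freeness. The one point to be careful about is the order of the product: the scaling factor contributed by a left-neighbour $v_i$ is $e^{\tilde w_{i-1}(v_i)}$ evaluated at the time of step $i$, and one must verify this equals the ``final'' value $e^{\tilde w_{k-1}(v_i)}$, which is exactly the freezing observation above.
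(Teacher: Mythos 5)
Your proposal is correct and matches the paper's own argument: the paper likewise notes that $\tilde{w}(v_k)$ is multiplied by $e^{\tilde{w}_{i-1}(v_i)}=e^{\tilde{w}_{k-1}(v_i)}$ exactly at the steps $i$ with $v_i\in N_L(v_k)$ and is otherwise unchanged, using the same freezing observation that a vertex's weight is never updated after step $i-1$.
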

\begin{proof} By definition of $\tilde{w}$, $\tilde{w}(v_k)$ increases by a factor $e^{\tilde{w}_{i-1}(v_i)}=e^{\tilde{w}_{k-1}(v_i)}$ for each step $i$ where $v_i\in N_L(v_k)$. For any other step, $\tilde{w}(v_k)$ is unchanged.
\end{proof}

\begin{claim}\label{claim:vkinI} $$ \mathbb{P}(v_k\in I) = \mathbb{E}\left[ \left(1-e^{-w_0(v_k) e^X}\right)e^{-X} \right].$$
\end{claim}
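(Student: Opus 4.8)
The plan is to isolate the last step of the original $w$-process and then transfer the resulting expression to the $\tilde{w}$-process via Claim~\ref{claim:procrel}. First I would observe that the weight $w_{k-1}(v_k)$ depends only on the random choices made during steps $1,\dots,k-1$, so it is a well-defined random variable measurable with respect to that history. At step $k$, the vertex $v_k$ is placed in $I$ exactly when the first bullet point is selected, which occurs with conditional probability $1-e^{-w_{k-1}(v_k)}$ independently of the preceding steps. Averaging over the history then gives
$$\mathbb{P}(v_k\in I)=\mathbb{E}\left[1-e^{-w_{k-1}(v_k)}\right].$$

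Next I would apply Claim~\ref{claim:procrel} to the function $f(x)=1-e^{-x}$. This function satisfies $f(0)=0$, so the claim is applicable and yields
$$\mathbb{E}\left[1-e^{-w_{k-1}(v_k)}\right]=\mathbb{E}\left[\left(1-e^{-\tilde{w}_{k-1}(v_k)}\right)e^{-X}\right].$$
To finish, I would substitute the closed form $\tilde{w}_{k-1}(v_k)=w_0(v_k)e^{X}$ supplied by Claim~\ref{claim:finalweight}, which transforms the right-hand side into exactly $\mathbb{E}\left[\left(1-e^{-w_0(v_k)e^{X}}\right)e^{-X}\right]$, giving the desired identity.

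The argument is short and I do not anticipate a genuine obstacle; the one place that warrants a little care is the first displayed identity, where one must be sure that the outcome of the step-$k$ coin flip is independent of the value $w_{k-1}(v_k)$ — but this is immediate from the sequential, step-by-step definition of the process, since $w_{k-1}(v_k)$ is a deterministic function of the first $k-1$ choices. After that, the claim follows by mechanically combining Claims~\ref{claim:procrel} and~\ref{claim:finalweight}, the only nontrivial check being that $f(0)=0$ so that Claim~\ref{claim:procrel} can be invoked.
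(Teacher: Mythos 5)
Your proposal is correct and follows the paper's own argument exactly: conditioning on the first $k-1$ steps to write $\mathbb{P}(v_k\in I)=\mathbb{E}\left[1-e^{-w_{k-1}(v_k)}\right]$, applying Claim~\ref{claim:procrel} with $f(x)=1-e^{-x}$ (noting $f(0)=0$), and substituting $\tilde{w}_{k-1}(v_k)=w_0(v_k)e^{X}$ from Claim~\ref{claim:finalweight}. No issues.
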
 
\begin{proof} By the definition of $w$ and $I$ we have $\mathbb{P}(v_k\in I) = \mathbb{E}[1-e^{-w_{k-1}(v_k)}].$ Let $f(x)=1-e^{-x}$. By Claim \ref{claim:procrel}, noting that $f(0)=0$, we get
$$ \mathbb{E}[1-e^{-w_{k-1}(v_k)}] = \mathbb{E}[f(w_{k-1}(v_k))] = \mathbb{E}[f(\tilde{w}_{k-1}(v_k))e^{-X}  ]. $$ By Claim \ref{claim:finalweight}, $\tilde{w}_{k-1}(v_k)=w_0(v_k) e^X$. Combining these gives the desired equality.
\end{proof}

\begin{proof}[Proof of Theorem \ref{thm:mainproc}] By Claim \ref{claim:EX} and Markov's inequality we have $$\mathbb{P}\left(X\geq  2\sum_{v_i \in N_L(v_k)} w_0(v_k)\right)\leq \frac{1}{2}.$$ By assumption of the theorem, we have $ w_0(v_k) e^X \leq \varepsilon $ whenever $X< 2\sum_{v_i \in N_L(v_k)} w_0(v_k)$. Using the elementary inequality $1-e^y\geq \frac{1-e^{-\varepsilon}}{\varepsilon} y$ valid for any $0\leq y\leq \varepsilon $ it follows that 
$$\left(1-e^{-w_0(v_k) e^X }\right)e^{-X} \geq \frac{1-e^{-\varepsilon}}{\varepsilon} w_0(v_k) e^X\cdot e^{-X}=\frac{1-e^{-\varepsilon}}{\varepsilon} w_0(v_k)$$ for any such $X$. Combining this with Claim \ref{claim:vkinI}, we immediately get
$$\mathbb{P}(v_k \in I)=\mathbb{E}\left[\left(1-e^{-w_0(v_k) e^X}\right)e^{-X}\right]  \geq \frac12 \frac{1-e^{-\varepsilon}}{\varepsilon} w_0(v_k).$$
\end{proof}

\begin{proof}[Proof of Theorem \ref{thm:main}.] Let $G$ be a triangle-free $d$-degenerate graph with degeneracy order $v_1, \dots, v_n$ such that $|N_L(v_i)|\leq d$ for all vertices $v_i$. Assume $d\geq 3.$ We apply Theorem \ref{thm:mainproc} with $w_0\equiv \frac{\ln d -2\ln\ln d}{2d}.$ One immediately checks that
$$ w_0(v_k) \exp\left(2\sum_{v_i\in N_L(v_k)} w_0(v_i) \right) \leq \frac{1}{2\ln d}=:\varepsilon,$$
which implies that
$$\mathbb{P}(v_k\in I) \geq \frac{1-e^{-\varepsilon}}{2\varepsilon} w_0(v_k)=\left(\frac14-o(1)\right)\frac{\ln d}{d},$$
where the last equality uses that $\frac{1-e^{-\varepsilon}}{\varepsilon}\rightarrow 1$ as $\varepsilon\rightarrow 0$.
\end{proof}
\begin{proof}[Proof of Theorem \ref{thm:maingen}] We apply Theorem \ref{thm:mainproc} with $w_0(v_i):=\frac12 p(v_i)$. Then
$$ w_0(v_k) \exp\left(2\sum_{v_i\in N_L(v_k)} w_0(v_i) \right) \leq \frac12 p(v_k) \prod_{v_i\in N_L(v_k)}\left(1-p(v_i)\right)^{-1} \leq \frac12=:\varepsilon,$$
which implies that
$$\mathbb{P}(v_k\in I) \geq \frac{1-e^{-\varepsilon}}{2\varepsilon} \frac12 p(v_k) = \frac{1-e^{-\frac12}}{2} p(v_k).$$
\end{proof}

\begin{remark} We note that it is possible to improve the constant $\alpha$ in Theorem \ref{thm:maingen} to $\frac14$ by a more elaborate analysis of the expectation in Claim \ref{claim:vkinI}, but, for the sake or brevity, we will not elaborate on this here.
\end{remark}

\section*{Acknowledgements} My thanks to Raphael Steiner for helping me to verify the proof, and to him, Antonio Gir\~{a}o, Zach Hunter and Charlotte Knierim for helpful discussions and feedback on earlier drafts.

\end{document}